\newcommand{\keywords}[1]{\par\addvspace\baselineskip
\noindent\keywordname\enspace\ignorespaces#1}
\begin{document}

\mainmatter  

\title{New results about multi-band uncertainty in Robust Optimization%
\thanks{This work was partially supported by the \emph{German Federal Ministry of Education and Research} (BMBF), project \emph{ROBUKOM} \cite{BlDAHa11,KoEtAl12}, grant 03MS616E, and by the DFG Research Center \textsc{Matheon} - \textit{Mathematics for key technologies}, project B23 - \emph{Robust optimization for network applications}.
The present paper is a revised version of the one appeared in the Proceedings of SEA 2012 \cite{BuDA12a}. The original
publication is available at www.springerlink.com.}%
}

\titlerunning{New results about multi-band uncertainty in Robust Optimization}

%
%
\author{Christina B\"using\inst{1} \and Fabio D'Andreagiovanni\inst{2}%
}
\authorrunning{C. B\"using \and F. D'Andreagiovanni}

\institute{Institut f\"ur Mathematik, Technische Universit\"at Berlin\\
Strasse des 17 Juni 136, 10623 Berlin, Germany\\
\mailsa\\
\
\\
\and
Konrad-Zuse-Zentrum f\"ur Informationstechnik Berlin (ZIB)\\
Takustrasse 7, 14195 Berlin, Germany\\
\mailsb\\
}

%
%

\toctitle{Lecture Notes in Computer Science}
\tocauthor{Authors' Instructions}
\maketitle

\begin{abstract}
``The Price of Robustness'' by Bertsimas and Sim \cite{BeSi04} represented a breakthrough in the development of a tractable robust counterpart of Linear Programming Problems.
However, the central modeling assumption that the deviation band of each uncertain
parameter is single may be too limitative in practice: experience indeed suggests that the deviations distribute also internally to the single band, so that getting a higher resolution by partitioning the band into multiple sub-bands seems advisable.
The critical aim of our work is to close the knowledge gap about the adoption of a multi-band uncertainty set in Robust Optimization: a general definition and intensive theoretical study of a multi-band model are actually still missing.
Our new developments have been also strongly inspired and encouraged by our industrial partners, which have been interested in getting a better modeling of arbitrary distributions, built on historical data of the uncertainty affecting the considered real-world problems.
In this paper, we study the robust counterpart of a Linear Programming Problem with uncertain coefficient matrix, when a multi-band uncertainty set is considered. We first show that the robust counterpart corresponds to a compact LP formulation. Then we investigate the problem of separating cuts imposing robustness and we show that the separation can be efficiently operated by solving a min-cost flow problem.
Finally, we test the performance of our new approach to Robust Optimization on realistic instances of a Wireless Network Design Problem subject to uncertainty.

\keywords{Robust Optimization, Multi-band Uncertainty, Compact Robust Counterpart, Cutting Planes, Network Design.}
\end{abstract}

\section{Introduction} \label{sec:intro}

A fundamental assumption in classical optimization is that all data are exact. However, many real-world problems involve data that are uncertain or not known with precision, because of erroneous measurements or adoptions of approximated numerical representations. If such uncertainty is neglected, optimal solutions computed for nominal data values may become costly or infeasible. As a consequence, considering the impact of uncertainty on an optimization model is a critical issue when dealing with real-world problems.

During the last years, Robust Optimization (RO) has become a valid methodology to deal with optimization problems subject to uncertainty. A key concept of RO is to model uncertainty as hard constraints, that are added to the original formulation of the problem. This restricts the set of feasible solutions to \emph{robust solutions}, i.e. solutions that are protected from deviations of the data.
Such a robust approach is crucial when dealing with high risk events, such as aircraft scheduling \cite{MuVaZe95}, or sensor placement in contaminant warning systems for water distribution networks \cite{WaHaMu06}. In such settings, standard approaches like deterministic optimization or Stochastic Programming fail to protect against severe deviations, leading to unpredictable consequences.
For an exhaustive introduction to the theory and applications of RO, we refer the reader to the book by Ben-Tal et al. \cite{BeElNe09}, to the recent survey by Bertsimas et al. \cite{BeBrCa11} and to the Ph.D. Thesis \cite{Bu11}.

An approach to model uncertain data that has attracted a lot of attention
is the so called $\Gamma$-scenario set, introduced by Betsimas and Sim (BS) \cite{BeSi04}
and then adapted to several applications.
The uncertainty model for a Linear Program (LP) considered in BS
assumes that, for each coefficient $a$ we are given a nominal
value $\bar{a}$ and a maximum deviation $d$ and that the
actual value lies in the interval $[\bar{a}-d,\bar{a}+d]$.
Moreover, a parameter $\Gamma$ is introduced to represent
the maximum number of coefficients that deviate
from their nominal value. Hence, $\Gamma$ controls the
conservativeness of the robust model and
its introduction comes from the natural observation
that it is unlikely that all coefficients deviate from their nominal value at the same time.
A central result presented in BS is that, under the previous characterization
of the uncertainty set, the robust counterpart of an LP corresponds to
a linear formulation.
This counterpart has the desirable
properties of being \emph{purely linear} and, above all, \emph{compact},
i.e. the number of variables and constraints is polynomial in the
size of the input of the deterministic problem.

The use of a single deviation band may greatly limit the power of modeling uncertainty. This is particularly evident when the probability of deviation sensibly varies within the band: in this case, neglecting the inner-band behaviour and just considering the extreme values like in BS may lead to a rough estimate of the deviations and thus to unrealistic uncertainty set, which either overestimate or underestimate the overall deviation.
Having a higher modeling resolution would therefore be very desirable. This can be accomplished by breaking the single band into multiple and narrower bands, each with its own $\Gamma$.
Such model is particularly attractive when historical data about the deviations are available, a very common case in real-world problems. A multi-band uncertainty set can indeed effectively approximate the shape of the distribution of deviations built on past observations, guaranteeing a much higher modeling power than BS.
This observation was first captured by Bienstock and taken into account
to develop an RO framework for the special case of Portfolio Optimization \cite{Bi07}. It was then extended to Wireless Network Design \cite{BiDA09,DA10}. Yet, no definition and intensive theoretical study of a more general multi-band model applicable in other contexts
have been done. The main goal of this paper is to close such gap.

We remark that investigating the adoption of a multi-band uncertainty set and studying the theoretical properties of the resulting model have been discussed with and strongly encouraged by our industrial partners, such as British Telecom Italia (BT) and Nokia Siemens Networks (NSN), in past and present collaborations about real network design \cite{BiDA09,BlDAHa11,DA10,KoEtAl12}. NSN, in particular, has been particularly interested in finding refined models for taking into account the arbitrary and non-symmetrical distributions of traffic uncertainty characterizing nation-wide optical networks \cite{Sc11}. Our modeling and theoretical developments are thus also strongly based on practical industrial needs. A better modeling of the traffic uncertainty affecting telecommunication networks has been also one of the critical objectives of our research activities in the German industrial research project ROBUKOM, aimed at developing new models and algorithms for the design of robust and survivable networks \cite{BlDAHa11,KoEtAl12}, in collaboration with NSN \cite{NSN12} and the German National Research and Education Network (DFN) \cite{DFN12}.

\subsubsection*{Contributions and Outline.}
In this work, we study the robust counterpart of an LP with uncertain coefficient matrix, when a \emph{multi-band uncertainty set} is considered. The main original contributions are:
\begin{enumerate}
  \item a compact formulation for the robust counterpart of an LP;
  \item an efficient method for the separation of robustness cuts (i.e., cuts that impose robustness), based on solving a min-cost flow instance;
  \item computational experiments comparing the performance of solving the compact formulation versus a cutting plane approach on realistic wireless network design instances.
\end{enumerate}

\noindent
In Section~\ref{sec:compactMB}, we show that the robust counterpart of an LP under multi-band uncertainty corresponds to a compact Linear Programming formulation.
We then proceed to study the separation problem of robustness cuts in Section~\ref{sec:rocuts}.
Finally, in Section \ref{sec:compStudy}, we test the performance of our new model and solution methods to Robust Optimization, to tackle the uncertainty affecting signal propagation in a set of realistic DVB-T instances of a wireless network design problem.

\subsection{Model and Notation} \label{subsec:model}

In our study about the robust counterpart of Linear Programming
Problems subject to multi-band uncertainty,
we refer to a generic Linear Program of the form:
\begin{eqnarray*}
\max &  & \sum_{j\in J}c_{j}\hspace{0.1cm}x_{j}\hspace{3.4cm}(LPP)\\
 &  & \sum_{j\in J}a_{ij}\hspace{0.1cm}x_{j}\leq b_{i}\hspace{1cm}i\in I\\
 &  & x_{j}\geq0\hspace{2.1cm}j\in J
\end{eqnarray*}

\noindent
where $I=\{1,\ldots,m\}$ and $J=\{1,\ldots,n\}$ denote the set
of constraint and variable indices, respectively.

We assume that the value of each coefficient $a_{ij}$ is uncertain and is equal to the summation of a \emph{nominal value} $\overline{a}_{ij}$ and a \emph{deviation} lying in the range $[d_{ij}^{K^{-}},d_{ij}^{K^{+}}]$, where $d_{ij}^{K^{-}},d_{ij}^{K^{+}} \in \mathbb{R}$ represent the maximum negative and positive deviations from $\overline{a}_{ij}$, respectively.
The \emph{actual value} $a_{ij}$ thus lies in the interval
$[\bar{a}_{ij}+d_{ij}^{K^{-}}, \hspace{0.1cm} \overline{a}_{ij}+d_{ij}^{K^{+}}]$.
We note that assuming that the uncertainty \emph{only affects} the elements of coefficient matrix does not limit the generality of our study, as uncertainty on the cost $c$ and on the r.h.s. $b$ can be included in a very straightforward way in the coefficient matrix \cite{BeBrCa11}.

We derive a generalization of the Bertsimas-Sim uncertainty model
by partitioning the single deviation band $[d_{ij}^{K-},d_{ij}^{K+}]$ of each coefficient $a_{ij}$ into $K$ bands, defined on the basis of $K$ deviation values:
\[
-\infty<
{d_{ij}^{K^{-}}<\cdots<d_{ij}^{-2}<d_{ij}^{-1}
\hspace{0.1cm}<\hspace{0.2cm}d_{ij}^{0}=0\hspace{0.2cm}<\hspace{0.1cm}
d_{ij}^{1}<d_{ij}^{2}<\cdots<d_{ij}^{K^{+}}}
<+\infty .
\]

\noindent
Through these deviation values, we define:
1) a set
of positive deviation bands, such that each band $k\in \{1,\ldots,K^{+}\}$
corresponds to the range $(d_{ij}^{k-1},d_{ij}^{k}]$;
2) a set of negative deviation bands, such that each band $k \in \{K^{-}+1,\ldots,-1,0\}$
corresponds to the range $(d_{ij}^{k-1},d_{ij}^{k}]$ and band $k = K^{-}$
corresponds to the single value $d_{ij}^{K^{-}}$
(the interval of each band but $k =K^{-}$ is thus open on the left).
With a slight abuse of notation, in what follows we indicate
a generic deviation band through the index $k$, with  $k \in K = \{K^{-},\ldots,-1,0,1,\ldots,K^{+}\}$
and the corresponding range by $(d_{ij}^{k-1}, d_{ij}^{k}]$.

Additionally, for each band $k\in K$, we define a lower bound $l_{k}$ and
an upper bound $u_{k}$ on the number of deviations that may fall in $k$, with $l_k, u_{k} \in \mathbb{Z}$ satisfying $0 \leq l_k \leq u_{k} \leq n$. In the case of band k = 0,
we assume that $u_{0}=n$, i.e. we
do not limit the number of coefficients that take
their nominal value. Furthermore, we assume that $\sum_{k \in K} l_k \leq n$, so that there always exists a feasible realization of the coefficient matrix.

We remark that, in order to avoid an overload of the notation, we assume that the number of bands $K$ and the bounds $l_k, u_k$ are the same for each constraint $i \in I$. Anyway, it is straightforward to modify all presented results to take into account different values of those parameters for each constraint.

All the elements introduced above define what we call a \emph{multi-band uncertainty set} $\mathcal{S}_M$.
%
%
%
We now proceed to study the robust counterpart of (LPP) under multi-band uncertainty.

\section{A Compact Robust LP Counterpart} \label{sec:compactMB}

The robust counterpart of (LPP) under a multi-band uncertainty set defined by $\mathcal{S}_{M}$ can be equivalently written as:
\begin{eqnarray*} \label{robustLP}
\max && \sum_{j \in J} c_{j} \hspace{0.1cm} x_{j}
\\
&& \sum_{j \in J} \bar{a}_{ij} \hspace{0.1cm} x_{j}
+ DEV_i(x,\mathcal{S}_{M})
\leq b_i
\hspace{1cm}
i \in I
\\
&& x_j \geq 0
\hspace{4.6cm} j \in J ,
\end{eqnarray*}

\noindent
where $DEV_i(x,\mathcal{S}_{M})$ is the maximum overall deviation allowed by the multi-band uncertainty set for a feasible solution $x$ when constraint $i$ is considered. Note that we replace the actual value of a coefficient $a_{ij}$ with the summation of the nominal value $\bar{a}_{ij}$ and a deviation
falling in exactly one of the $K$ bands.
The computation of $DEV_i(x,\mathcal{S}_{M})$ corresponds to the optimal value of the  following pure 0-1 Linear Program (note that in this case the index $i$ is fixed):
\begin{eqnarray} \label{DEV01}
DEV_i(x,\mathcal{S}_{M}) = &\max&
\sum_{j \in J} \sum_{k \in K} d^{k}_{ij} \hspace{0.1cm} x_{j} \hspace{0.1cm} y_{ij}^{k}
\hspace{2cm} (DEV01)
\nonumber
\\
&&
l_k
\hspace{0.1cm} \leq \hspace{0.1cm}
\sum_{j \in J} y_{ij}^{k}
\hspace{0.1cm} \leq \hspace{0.1cm}
u_k
\hspace{0.7cm}
k \in K
\label{DEV01_bounds}
\\
&&
\sum_{k \in K} y_{ij}^{k} \leq 1
\hspace{2cm}
j \in J
\label{DEV01_singleBand}
\\
&&
y_{ij}^{k} \in \{0,1\}
\hspace{2cm}
j \in J, k \in K .
\label{DEV01_bandVars}
\end{eqnarray}

\noindent
The binary variables $y_{ij}^k$  indicate if the deviation of a coefficient ${a}_{ij}$ lies in band $k$. The constraints (\ref{DEV01_singleBand}) ensure that each coefficient deviates in at most one band (actually these should be equality constraints, but, for assumption $u_{0}=n$ made in Section \ref{subsec:model}, we can consider inequalities). Finally, the constraints (\ref{DEV01_bounds}) impose the upper and lower bounds on the number of  deviations falling in each band $k$. Thus, an optimal solution of (DEV01) defines a distribution of the coefficients  among the bands that maximizes the deviation w.r.t.~the nominal values, while respecting the bounds on the number of deviations of each band.

\medskip

\noindent
We now prove that the robust counterpart of (LPP) under a multi-band uncertainty set $\mathcal{S}_{M}$ can be reformulated as a \emph{compact} Linear Program. To this end, consider first the linear relaxation of (DEV01):
\begin{eqnarray}\label{DEV01-relax}
&\max& \sum_{j\in J}\sum_{k\in K}d_{ij}^{k} \hspace{0.1cm} x_{j} \hspace{0.1cm} y_{ij}^{k}
\hspace{2.0cm} \mbox{(DEV01-RELAX)}
\nonumber
\\
&&
l_{k} \leq \sum_{j\in J} y_{ij}^{k} \leq u_{k}
\hspace{2.3cm}
k\in K
\label{DEV01-relax-cnstr1}
\\
&&
\sum_{k\in K}y_{ij}^{k} \leq 1
\hspace{3.1cm} j\in J
\label{DEV01-relax-cnstr2}
\\
&&
y_{ij}^{k} \geq 0
\hspace{3.7cm} j\in J, k\in K ,
\label{DEV01-relax-cnstr3}
\end{eqnarray}

\noindent
where we dropped constraints $y_{ij}^{k} \leq 1$ since they are dominated by constraints (\ref{DEV01-relax-cnstr2}).

\begin{proposition}
The polytope described by the constraints of (DEV01-RELAX) is integral.
\end{proposition}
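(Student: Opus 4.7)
The plan is to display the polytope of (DEV01-RELAX) as (the projection of) the feasible region of a capacitated network flow problem on a bipartite auxiliary graph with integer lower and upper arc bounds. Integrality will then follow from the classical fact that node--arc incidence matrices of directed graphs are totally unimodular, so the associated flow polyhedron is integral whenever all bounds are integer.

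Concretely, I would build a directed network with a source $s$, a sink $t$, one node $B_k$ for each band $k \in K$, and one node $V_j$ for each column index $j \in J$; recall that the row index $i$ is fixed throughout (DEV01-RELAX). The arcs are: $(s, B_k)$ with capacity interval $[l_k, u_k]$; for every pair $(k, j)$ an arc $(B_k, V_j)$ with capacity interval $[0, 1]$, whose flow value I identify with the variable $y_{ij}^k$; and $(V_j, t)$ with capacity interval $[0, 1]$. Flow conservation at $B_k$ forces the flow entering $B_k$ from $s$ to equal $\sum_{j \in J} y_{ij}^k$, so the capacity bounds on $(s, B_k)$ are exactly the band constraints~(\ref{DEV01-relax-cnstr1}); flow conservation at $V_j$ combined with the upper capacity on $(V_j, t)$ yields the single-band constraint~(\ref{DEV01-relax-cnstr2}); and the zero lower capacity on the arcs $(B_k, V_j)$ gives non-negativity~(\ref{DEV01-relax-cnstr3}).

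Since all arc capacities are integer, every vertex of the flow polytope is integral. The map from $y$ to the auxiliary arc flows is an affine bijection with integer coefficients (the flow on $(s, B_k)$ is forced to be $\sum_j y_{ij}^k$ and the flow on $(V_j, t)$ is forced to be $\sum_k y_{ij}^k$, both by flow conservation), so vertices of the $y$-polytope are in one-to-one correspondence with vertices of the flow polytope, yielding the claimed integrality. I expect the only point needing care is this vertex correspondence, but it is immediate since the lifting is uniquely and integrally determined by $y$.
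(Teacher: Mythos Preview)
Your argument is correct and rests on the same bipartite structure the paper exploits, though the packaging differs. The paper proceeds more directly: it writes out the constraint matrix $D_i$ of (DEV01-RELAX), observes that after stripping the lower-bound rows one obtains the node--edge incidence matrix of the complete bipartite graph on the two node sets $K$ and $J$ (each column $y_{ij}^k$ has exactly one $1$ in the $k$-row layer and one $1$ in the $j$-row layer), concludes this submatrix is totally unimodular, and then recovers the full $D_i$ by row duplication and multiplication by $-1$, operations that preserve total unimodularity. Your flow network on $\{s\}\cup\{B_k\}_{k\in K}\cup\{V_j\}_{j\in J}\cup\{t\}$ is precisely this bipartite graph with a source and sink adjoined, and your lower capacities on the arcs $(s,B_k)$ together with the vertex-correspondence argument play the role of the paper's TU-closure step. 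Both routes are standard; the paper's saves the auxiliary arcs and the bijection check, while yours has the advantage of foreshadowing the min-cost-flow construction used later in the paper for the separation problem.
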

\begin{proof}
We start by rewriting  all the constraints of (DEV01-RELAX) into the form $\alpha^T y \leq \beta$ obtaining  the following matrix form:
\[
D_i
\hspace{0.1cm}
y_i
\hspace{0.1cm}
=
\hspace{0.1cm}
{\small
    \left(\begin{array}{c|c|c|c}
    &  & \\
    -I & -I & \mbox{ } \cdots \mbox{ } & -I\\
    &  &\\
    \hline
    &  & \\
    I & I & \mbox{ } \cdots \mbox{ } & I\\
    &  &\\
    \hline
    1 \cdots 1 & & \\
     & 1\cdots1 & &\\
     & & \mbox{ } \ddots \mbox{ }  &\\
     & &  & 1\cdots1 \\
    \end{array}\right)
\hspace{0.2cm}
    \left(\begin{array}{c}
    y_{i1}^{K^{-}}\\
    \vdots\\
    y_{i1}^{K^{+}}\\
    \hline
    \vdots\\
    y_{ij}^{k}\\
    \vdots\\
    \hline
    y_{in}^{K^{-}}\\
    \vdots\\
    y_{in}^{K^{+}}\\
    \end{array}\right)
\hspace{0.1cm}
\leq
\hspace{0.1cm}
    \left(\begin{array}{c}
    \vdots\\
    - l_k\\
    \vdots\\
    \hline
    \vdots\\
    u_k\\
    \vdots\\
    \hline
    \vdots\\
    1\\
    \vdots\\
    \end{array}\right)
    }
\hspace{0.1cm}
=
\hspace{0.1cm}
g_i .
\]

\noindent
Consider now the submatrix $\tilde{D}_i$  obtained from $D_i$ by eliminating the top layer of blocks $(-I|-I|\cdots|-I)$.
It is easy to verify that $\tilde{D}_i$ is the incidence matrix of a bipartite graph: the elements of the two disjoint set of nodes of the graph are in correspondence with the rows of the two distinct layers of blocks in $\tilde{D}_i$. Moreover, every column has exactly two elements that are not equal to zero, one in the upper layer  and one in the lower layer.  Being the incidence matrix of a bipartite graph, $\tilde{D}_i$ is a totally unimodular matrix \cite{NeWo88}.

In order to show that also the original matrix $D_i$ is totally unimodular, we first need to recall the equivalence of the following three statements \cite{NeWo88}:
  1) $A$ is a totally unimodular matrix;
  2) a matrix obtained by duplicating rows of $A$ is totally unimodular;
  3) a matrix obtained by multiplying a row of $A$ by -1 is totally unimodular.
Since $D_i$ can be obtained from $\tilde{D}_i$ by duplicating each row of the upper block, and multiplying each row of the duplicated block by -1, $D_i$ is totally unimodular.

As $D_i$ is totally unimodular and the vector $g_i$ is integral, it is well-known that the polytope defined by $D_i y_i \leq g_i$ and $y_i \geq 0$ is integral, thus completing the proof.
\qed
\end{proof}

\noindent
Thanks to this integrality property, we can exploit strong duality to prove the main result of this section.

\newpage

\begin{theorem} \label{th:compact_RLP}
The robust counterpart of (LPP) under a multi-band uncertainty set $\mathcal{S}_{M}$ is equivalent to the following compact Linear Program:
\begin{eqnarray*}\label{robustCompactLP}
\max && \sum_{j\in J} c_{j} \hspace{0.1cm} x_{j}
\hspace{9.1cm}  (RLP)
\\
&& \sum_{j\in J} \bar{a}_{ij} \hspace{0.1cm} x_{j}
- \sum_{k\in K} l_{k} \hspace{0.1cm} v_{i}^{k}
+ \sum_{k\in K} u_{k} \hspace{0.1cm} w_{i}^{k}
+ \sum_{j\in J} z_{i}^{j} \leq b_{i}
\hspace{1.1cm} i\in I
\\
&& -v_{i}^{k} + w_{i}^{k} + z_{i}^{j} \geq d_{ij}^{k} \hspace{0.1cm} x_{j}
\hspace{4.8cm} i \in I, j\in J, k\in K
\\
&& v_{i}^{k}, \hspace{0.1cm} w_{i}^{k} \geq 0
\hspace{6.7cm} i \in I, k\in K
\\
&& z_{i}^{j} \geq 0
\hspace{7.4cm} i \in I,j\in J
\\
&& x_{j} \geq 0
\hspace{7.3cm} j\in J .
\end{eqnarray*}
\end{theorem}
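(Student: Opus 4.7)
The plan is to eliminate the inner maximization $DEV_i(x,\mathcal{S}_M)$ from the robust counterpart constraint by LP duality, and to observe that the resulting minimization can be absorbed into the outer LP as a system of linear constraints with auxiliary variables.

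First, I would invoke the preceding Proposition: since the polytope of (DEV01-RELAX) is integral, every vertex optimum of the relaxation is feasible for (DEV01), so the optimal values coincide and we may replace $DEV_i(x,\mathcal{S}_M)$ with the optimum of (DEV01-RELAX). Before dualizing I would check that strong duality applies: (DEV01-RELAX) is feasible (for example, take $l_k$ of the $y_{ij}^k$ equal to one and assign the remaining $n-\sum_k l_k$ coefficients to band $k=0$, which is possible by the hypotheses $\sum_k l_k \leq n$ and $u_0=n$), and its objective is bounded above by $\sum_{j,k} |d_{ij}^k| x_j$ because $0\le y_{ij}^k\le 1$.

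Next I would write the LP dual of (DEV01-RELAX), associating dual variables $v_i^k\ge 0$ with the lower-bound constraints $\sum_j y_{ij}^k \ge l_k$, $w_i^k\ge 0$ with the upper-bound constraints $\sum_j y_{ij}^k \le u_k$, and $z_i^j\ge 0$ with the packing constraints $\sum_k y_{ij}^k\le 1$. Since the coefficient of $y_{ij}^k$ in these three constraint families is $-1$, $+1$, $+1$ respectively, and the objective coefficient is $d_{ij}^k x_j$, the dual constraint for $y_{ij}^k$ reads
\[
-v_i^k + w_i^k + z_i^j \;\ge\; d_{ij}^k \, x_j ,
\]
and the dual objective is $\min\;-\sum_k l_k v_i^k + \sum_k u_k w_i^k + \sum_j z_i^j$. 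Strong duality then gives
\[
DEV_i(x,\mathcal{S}_M) \;=\; \min \Bigl\{\,-\!\sum_{k} l_k v_i^k + \sum_{k} u_k w_i^k + \sum_{j} z_i^j \;:\; (v,w,z) \text{ dual-feasible}\,\Bigr\}.
\]

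Finally I would substitute this into the robust constraint $\sum_j \bar a_{ij} x_j + DEV_i(x,\mathcal{S}_M) \le b_i$. The inequality $\sum_j \bar a_{ij} x_j + \min\{\cdot\} \le b_i$ is equivalent to the existence of a dual-feasible triple $(v_i^k, w_i^k, z_i^j)$ achieving $\sum_j \bar a_{ij} x_j - \sum_k l_k v_i^k + \sum_k u_k w_i^k + \sum_j z_i^j \le b_i$; indeed, if such a triple exists the bound on $\min$ yields the constraint, and conversely the optimal dual solution serves as a witness. Writing this ``there exists'' as additional variables and constraints of the outer LP, one per constraint $i\in I$, yields precisely (RLP).

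The main obstacle is bookkeeping rather than conceptual: getting the signs right when dualizing the two-sided bounds $l_k \le \sum_j y_{ij}^k \le u_k$ (which accounts for the minus sign in front of $\sum_k l_k v_i^k$ in (RLP)), and justifying cleanly that the minimization over the dual variables can be replaced by the bare feasibility of $(v,w,z)$ in the enlarged LP without changing the set of feasible $x$.
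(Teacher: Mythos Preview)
Your proposal is correct and follows essentially the same approach as the paper: use the integrality proposition to pass from (DEV01) to its linear relaxation, invoke LP strong duality (feasibility and boundedness of (DEV01-RELAX)), and substitute the dual into the robust constraint to obtain (RLP). If anything, you are more explicit than the paper about verifying feasibility/boundedness and about why the inner minimization can be replaced by existential dual variables in the outer LP.
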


\begin{proof}
As first step, consider the dual problem of (DEV01-RELAX):
\begin{eqnarray*}\label{dual_DEV_01}
\min &&
\sum_{k\in K}-l_{k} \hspace{0.1cm} v_{i}^{k} + \sum_{k\in K} u_{k} \hspace{0.1cm} w_{i}^{k} + \sum_{j\in J} z_{i}^{j}
\hspace{2.0cm}  \mbox{(DEV01-RELAX-DUAL)}
\\
&& -v_{i}^{k} + w_{i}^{k} + z_{i}^{j} \geq d_{ij}^{k} \hspace{0.1cm} x_{j}
\hspace{3.5cm} j\in J, k\in K
\\
&& v_{i}^{k}, \hspace{0.2cm} w_{i}^{k} \geq 0
\hspace{5.3cm} k \in K
\\
&& z_{i}^{j} \geq 0
\hspace{6.1cm} j\in J ,
\end{eqnarray*}

\noindent
where the dual variables $v_{i}^{k}, w_{i}^{k}, z_{i}^{j}$ are respectively associated with the primal constraints (\ref{DEV01-relax-cnstr1},
\ref{DEV01-relax-cnstr2},
\ref{DEV01-relax-cnstr3}) of (DEV01-RELAX) defined for constraint $i$.

Since (DEV01-RELAX) is feasible and bounded by definition of the scenario
uncertainty set $\mathcal{S}_{M}$, also (DEV01-RELAX-DUAL) is feasible and bounded and the optimal values of the two problems are the same (strong duality). Then, by Proposition
1, we can replace the maximum total deviation $DEV_i(x,\mathcal{S}_{M})$ with problem
(DEV01-RELAX-DUAL), obtaining the compact Linear Program
(RLP). This concludes the proof.
\qed
\end{proof}

\noindent
In comparison to (LPP),
this compact formulation uses \mbox{$2 \cdot K \cdot m + n \cdot m$} additional variables and includes $K \cdot n \cdot m$ additional constraints. Similar reasonings can be also done to derive a compact robust counterpart of a Mixed-Integer Linear Program with uncertain coefficient matrix.

\section{Separation of Robustness Cuts}\label{sec:rocuts}

In this section, we consider the problem of testing whether
a solution $x\in\mathbb{R}^{n}$ is robust feasible, i.e. $\sum_{j \in J} \bar{a}_{ij} \hspace{0.1cm} x_{j} + DEV_i(x,\mathcal{S}_{M}) \leq b_i$
for every scenario $S\in\mathcal{S}_{M}$ and $i\in I$. This problems becomes important if we adopt a cutting plane approach instead of directly solving the compact robust counterpart (RLP): we start by solving the nominal problem (LPP) and then we check if the optimal solution is robust. If not, we generate a cut that imposes robustness (\emph{robustness cut}) and we add it to the problem. This initial step can then be iterated as in a typical cutting plane method \cite{NeWo88}.

In the case of the Bertsimas-Sim model, the problem of separating a robustness cut
is extremely simple \cite{FiMo08}: given a solution $x\in\mathbb{R}^{n}$, for each constraint $i \in I$, the problem consists of sorting the deviations $d_{ij}^{K+}x_{j}$ in non-increasing order and choose the highest $\Gamma_i$ deviations. If for some $i$ the sum of these deviations exceeds $b_{i} - \sum_{j \in J} \bar{a}_{ij} x_{j}$ then we have found a robustness cut to add. Otherwise, $x$ is feasible and robust.

In the case of multi-band uncertainty, this simple approach does not guarantee the robustness of a computed solution.
However, we prove that for a given
solution $x\in\mathbb{R}^{n}$ and a constraint $i\in I$, checking
the robust feasibility of $x$ corresponds to solving a \emph{min-cost flow problem} \cite{AhMaOr93} and thus can be done efficiently.
The min-cost flow instance associated with the robustness check is denoted by $(G,c)_{x}^{i}$ and defined as follows.
$G$ is a directed graph that contains one vertex $v_j$ for each variable index $j \in J$, one vertex $w_k$ for each band $k \in K$ and two vertices $s,t$ that are the source and the sink of the flow. So the set of vertices is $V=\bigcup_{j\in J} \{v_{j}\} \cup \bigcup_{k\in K}\{w_{k}\} \cup \{s,t\}$. The set of arcs $A$ is the union of three sets $A_1, A_2, A_3$. $A_1$ contains one arc from $s$ to every variable vertex $v_{j}$, i.e. $A_{1}=\{(s,v_{j}): j\in J\}$.
$A_2$ contains one arc from every variable vertex $v_{j}$ to every band vertex $w_{k}$, i.e. $A_{2}=\{(v_{j},w_{k}): j\in J, k\in K\}$. Finally, $A_3$ contains one arc from every band vertex $w_{k}$ to the sink $t$, i.e. $A_{3}=\{(w_{k},t):  k\in K\}$.
By construction, $G(V,A)$ is bipartite and acyclic.
Each arc $a \in A$ is associated to a triple $(l_a,u_a,c_a)$, where $l_a,u_a$ are lower and upper bounds on the flow that can be sent on $a$ and $c_a$ is the cost of sending one unit of flow on $a$.
The values of the triples $(l_a,u_a,c_a)$ are set in the following way: $(0,1,0)$ when $a\in A_{1}$; $(0,1,-d_{ij}^{k}x_{j})$ when $a=(v_j,w_k)\in A_{2}$; $(l_k,u_k,0)$ when $a=(w_k,t)\in A_{3}$.
Finally, the amount of flow that must be sent trough the network from $s$ to $t$ is equal to $n$. We denote by $F_i$ the set of feasible integral flows of value $n$ of the min-cost flow instance $(G,c)_{x}^{i}$.
The \emph{cost} of an $(s,t)$-flow $f \in F_i$ is defined by $c(f)=\sum_{a\in A} c_{a}f_{a}$. We recall that an integral min-cost flow of value $n$ can be computed in polynomial time, using for example the successive shortest path algorithm \cite{AhMaOr93}.

We now prove that by solving the min-cost flow instance $(G,c)_{x}^{i}$ defined above, we obtain the maximum deviation for a constraint $i$ and a solution $x$.
\begin{theorem} \label{theorem:robCut}
Let $x \in \mathbb{R}_{+}^{n}$ and let $\mathcal{S}_{M}$ be a multi-band uncertainty set.
Moreover, let $(G,c)_{x}^{i}$ be the min-cost flow instance corresponding with $x$ and a constraint $i \in I$ of (LPP) and built according to the previously presented rules.

\noindent
The solution $x$ is robust feasible w.r.t. $\mathcal{S}_{M}$ for constraint $i$ if and only if
$$
\bar{a}_{i}' x - c^{*}_i(x) \leq b_{i}
$$
where $c^{*}_i(x)$ is the minimum cost of a flow of the instance $(G,c)_{x}^{i}$.
\end{theorem}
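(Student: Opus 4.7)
The plan is to reduce the theorem to the identity $DEV_i(x,\mathcal{S}_M) = -c^*_i(x)$, after which the robust feasibility condition $\bar{a}_i' x + DEV_i(x,\mathcal{S}_M) \leq b_i$ becomes exactly $\bar{a}_i' x - c^*_i(x) \leq b_i$. So the whole effort is to prove this identity.

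First I would exhibit a bijection between feasible 0-1 solutions of (DEV01) and feasible integer $(s,t)$-flows of value $n$ in $(G,c)^i_x$. Given such a flow $f$, every arc in $A_1$ must be saturated, since there are $n$ such arcs each of capacity $1$ and the total flow equals $n$; hence each $v_j$ sends exactly one unit along some arc $(v_j,w_k)\in A_2$. Setting $y^k_{ij} := f_{(v_j,w_k)}$ produces a 0-1 vector with $\sum_{k\in K} y^k_{ij} = 1$ by flow conservation at $v_j$, and with $\sum_{j\in J} y^k_{ij} \in [l_k,u_k]$ by the bounds on the arcs in $A_3$, so $y$ is feasible for (DEV01). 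Conversely, given a feasible 0-1 solution $y$ of (DEV01), I would first absorb any index $j$ with $\sum_{k\in K} y^k_{ij} = 0$ into band $0$ by setting $y^0_{ij}:=1$: this leaves the objective unchanged because $d^0_{ij}=0$ and respects the upper bound because $u_0=n$. Then I would set $f_{(s,v_j)}=1$, $f_{(v_j,w_k)}=y^k_{ij}$, and $f_{(w_k,t)}=\sum_{j\in J} y^k_{ij}$, obtaining a feasible integer $(s,t)$-flow of value $n$.

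Next I would verify that costs align under this bijection. The only arcs carrying nonzero cost lie in $A_2$, with cost $-d^k_{ij}x_j$, so $c(f) = -\sum_{j\in J}\sum_{k\in K} d^k_{ij}\, x_j\, y^k_{ij}$, which is the negative of the (DEV01) objective at $y$. Minimizing the flow cost therefore corresponds to maximizing the deviation, giving $c^*_i(x) = -DEV_i(x,\mathcal{S}_M)$. Integrality of an optimal flow is automatic because the node-arc incidence matrix of $G$ is totally unimodular and the data $l_k, u_k, 1, n$ are integral; feasibility of the flow polyhedron follows from the standing assumptions $\sum_{k\in K} l_k \leq n$ and $u_0 = n$ introduced in Section~\ref{subsec:model}.

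The delicate point, which I expect to be the main obstacle, is the asymmetry between (DEV01) — where $\sum_{k\in K} y^k_{ij} \leq 1$ is allowed — and the flow model, which forces each $v_j$ to dispatch its incoming unit to some band. The band-$0$ reassignment described above bridges this gap cleanly, and the role of the modelling assumptions $d^0_{ij}=0$ and $u_0=n$ is precisely to make the trick cost-free and always admissible. With the identity $DEV_i(x,\mathcal{S}_M) = -c^*_i(x)$ in place, the equivalence stated in the theorem is then an immediate rewriting of the definition of robust feasibility for constraint $i$.
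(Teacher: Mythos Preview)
Your proposal is correct and follows essentially the same route as the paper: restrict attention to full assignments by absorbing unassigned indices into band~$0$ (the paper phrases this as passing from $Y_i$ to $\bar Y_i$), then set up the value-preserving bijection with integral $(s,t)$-flows of value $n$ to obtain $DEV_i(x,\mathcal{S}_M)=-c^*_i(x)$. The only cosmetic slip is that your ``bijection'' is really between $\bar Y_i$ and the integral flows, not all of (DEV01); you already handle this correctly via the band-$0$ reassignment, so nothing is missing.
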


\begin{proof}
Before proceeding to the core of the proof, we need to make some observations.
Let $Y_{i}$ be the set of feasible solutions to problem (DEV01), i.e.:
$$
Y_{i} =
\{
    y_i \in \{0,1\}^{|J|\hspace{0.05cm}|K|}: y_i
    \mbox{ satisfies (\ref{DEV01_bounds})-(\ref{DEV01_singleBand})}
\}  .
$$

\noindent
Each of these solutions represents a feasible assignment of the coefficients of constraint $i$ to the deviation bands $K$. Moreover, let $d(x, y_i)$ be the total deviation associated with a vector $x \in \mathbb{R}_{+}^{n}$ and a feasible assignment $y_i \in Y_{i}$, i.e.:
$$
d(x, y_i) =
\sum_{j \in J} \sum_{k \in K} d^{k}_{ij} \hspace{0.1cm} x_{j} \hspace{0.1cm} y_{ij}^{k} .
$$

\noindent
We note that this is actually the objective function of problem (DEV01).

It is easy to verify that for each feasible assignment that distributes less than $n = |J|$ coefficients among the bands, there exists a feasible assignment that distributes \emph{all} the $n$ coefficients among the bands and entails at least the same total deviation, i.e.:
\begin{eqnarray*}
    &&\forall \hspace{0.1cm} y_i^{1} \in Y_{i}: \sum_{j \in J} \sum_{k \in K} y_{ij}^{k1} < n
    \\
    &&\exists \hspace{0.1cm} y_i^{2} \in Y_{i}:
    \left\{
                        \begin{array}{lll}
                            \sum_{j \in J} \sum_{k \in K} y_{ij}^{k2} = n ,
                            \\
                            \
                            \\
                            d(x, y_i^{2}) \geq d(x, y_i^{1}) .
                        \end{array}
                    \right.
\end{eqnarray*}

\noindent
Thanks to the last observation, in order to find a feasible assignment $y_i \in Y_{i}$ that maximizes the total deviation $d(x, y_i)$, we can concentrate attention on the subsets $\bar{Y}_{i} \subseteq Y_{i}$ of assignments which distribute all the $n$ coefficients among the bands:
$$
\bar{Y}_{i} =
\{
    y_i \in \{0,1\}^{|J|\hspace{0.05cm}|K|}: y_i
    \mbox{ satisfies (\ref{DEV01_bounds})-(\ref{DEV01_singleBand})
    and }
    \sum_{j \in J} \sum_{k \in K} y_{ij}^{k} = n
\} .
$$

\noindent
We now prove the lemma by showing that, given a solution $x \in \mathbb{R}_{+}^{n}$ and a constraint $i \in I$, there exists a one-to-one correspondence between
a feasible assignment $y_i \in \bar{Y}_{i}$ with total deviation $d(x, y_i)$ and an integral flow $f$ of value $n$ and cost $c(f) = - d(x, y_i)$ of the min-cost flow instance $(G,c)_{x}^{i}$.

Once that this correspondence is proved, we can solve the problem of finding an assignment with maximum deviation $DEV_i(x,\mathcal{S}_{M})$ by finding an integral flow of minimum cost of the instance $(G,c)_{x}^{i}$. Indeed the following chain of equations holds:
$$
DEV_i(x,\mathcal{S}_{M})
\hspace{0.1cm} = \hspace{0.1cm}
\max_{y_i \in \bar{Y}_{i}} d(x, y_i)
\hspace{0.1cm} = \hspace{0.1cm}
- \min_{y_i \in \bar{Y}_{i}} - d(x, y_i)
\hspace{0.1cm} = \hspace{0.1cm}
- \min_{f \in F_i} c(f)
\hspace{0.1cm} = \hspace{0.1cm}
- c^{*}_i(x) .
$$

\vspace{0.3cm}

\noindent
Given a feasible assignment $y_i \in \bar{Y}_{i}$ with total deviation $d(x, y_i)$, we construct an integral flow $f$ for the min-cost flow instance $(G,c)_{x}^{i}$, in the following way:
\begin{eqnarray}
&&
  f_{(s, v_j)} = 1
        \hspace{2.9cm} \forall j \in J
  \label{flowDef1}
  \\
&&
  f_{(v_j,w_k)} = 1
        \hspace{0.1cm} \Longleftrightarrow \hspace{0.1cm}
        y_{ij}^{k} = 1
        \hspace{0.6cm} \forall j \in J, k \in K
  \label{flowDef2}
  \\
&&
  f_{(w_k, t)} = \sum_{j \in J} y_{ij}^{k}
        \hspace{2.05cm} \forall k \in K .
  \label{flowDef3}
\end{eqnarray}

\noindent
It is clear that for each $j \in J, k \in K$ the flow components $f_{(s, v_j)}$, $f_{(v_j,w_k)}$ respect the capacity of the corresponding arcs. In the case of $f_{(w_k, t)}$, the capacity of arc $(w_k, t)$ is respected as (\ref{flowDef3}) holds and $l_k \leq \sum_{j \in J} y_{ij}^{k} \leq u_k$, $\forall k \in K$ by definition of $y_i$.
By (\ref{flowDef1}), the value of the flow $f$ is $\sum_{j \in J} f_{(s, v_j)} = |J| = n$ and its cost is:
$$
c(f)
\hspace{0.1cm} = \hspace{0.1cm}
\sum_{j \in J} \sum_{k \in K}
- d_{ij}^{k} \hspace{0.1cm} x_{j} \hspace{0.15cm} f_{(v_j,w_k)}
\hspace{0.1cm} = \hspace{0.1cm}
\sum_{j \in J} \sum_{k \in K} - d_{ij}^{k} \hspace{0.1cm} x_{j} \hspace{0.15cm} y_{ij}^{k}
\hspace{0.1cm} = \hspace{0.1cm}
- d(x, y_i) .
$$

\noindent
Concerning flow conservation, by definition of $y_i$, we know that each coefficient of the constraint $i$ is assigned to exactly one band, i.e. $\exists \mbox{ unique } k \in K: y_{ij}^{k} = 1$, $\forall j \in J$. As a consequence, flow conservation is respected in every vertex $v_j$ with $j \in J$.
In the case of a vertex $w_k$ with $k \in K$, we have:
$$
f_{(w_k, t)} = \sum_{j \in J} y_{ij}^{k} = \sum_{j \in J} f_{(v_j,w_k)} ,
$$
so the flow conservation is respected. Finally, in the case of the sink $t$, we have an ingoing flow of $\sum_{k \in K} f_{(w_k, t)} = \sum_{j \in J} \sum_{k \in K} y_{ij}^{k} = n$, which is thus equal to the value of the flow $f$.

\vspace{0.5cm}

\noindent
Given a feasible integral flow $f \in F_i$ of value $n$ and cost $c(f)$ of the min-cost flow instance $(G,c)_{x}^{i}$, we construct a deviation assignment $y_i \in \{0,1\}^{|J|\hspace{0.05cm}|K|}$, in the following way:
\begin{eqnarray}
    y_{ij}^{k} = 1
    \hspace{0.1cm} \Longleftrightarrow \hspace{0.1cm}
    f_{(v_j,w_k)} = 1
    \hspace{0.6cm} \forall j \in J, k \in K.
  \label{assignDef1}
\end{eqnarray}

\noindent
Since the value of $f$ is $n$ by definition and since each arc incident to a vertex $v_{j}$, $j \in J$ has lower and upper capacity bounds of 0 and 1, respectively, i) the flow on each arc $(s,v_{j})$ is unitary, ii) the ingoing flow of each vertex $v_{j}$ is unitary, and by flow conservation we have:
$$
\sum_{j \in J} \sum_{k \in K} f_{(v_j,w_k)} =  \sum_{j \in J} \sum_{k \in K} y_{ij}^{k} = n .
$$

\noindent
Moreover, by flow conservation and by the integrality of $f$, the outgoing flow of $v_{j}$ is unitary and is sent to a single node $w_k$. Therefore:
$$
\sum_{k \in K} f_{(v_j,w_k)} = \sum_{k \in K} y_{ij}^{k} = 1
$$

\noindent
Finally, by definition of lower and upper capacity bounds of an arc $f_{(w_k, t)}$ and by flow conservation, for every $k \in  K$ we have:
\begin{eqnarray*}
&&
l_k
\hspace{0.1cm} \leq \hspace{0.1cm}
f_{(w_k, t)}
\hspace{0.1cm} \leq \hspace{0.1cm}
u_k,
\\
&&
f_{(w_k, t)} = \sum_{j \in J} f_{(v_j,w_k)} = \sum_{j \in J} y_{ij}^{k}
\\
\
\\
&&
\Longrightarrow \hspace{0.2cm}  l_k
\hspace{0.1cm} \leq \hspace{0.1cm} \sum_{j \in J} y_{ij}^{k}
\hspace{0.1cm} \leq \hspace{0.1cm}
 u_k
\end{eqnarray*}

\noindent
By resuming all the properties of $y_i$ that we proved, it follows that $y_i \in \bar{Y}_{i}$ and the associated total deviation is equal to:
$$
- d(x, y_i)
\hspace{0.1cm} = \hspace{0.1cm}
\sum_{j \in J} \sum_{k \in K} - d_{ij}^{k} \hspace{0.1cm} x_{j} \hspace{0.15cm} y_{ij}^{k}
\hspace{0.1cm} = \hspace{0.1cm}
\sum_{j \in J} \sum_{k \in K} - d_{ij}^{k} \hspace{0.1cm} x_{j} \hspace{0.15cm} f_{(v_j,w_k)}
\hspace{0.1cm} = \hspace{0.1cm}
c(f) .
$$

\noindent
This concludes the proof of the existence of a one-to-one correspondence between the set $\bar{Y}_{i}$ of feasible deviation assignments and the set $F_i$ of integral flows and of the relation $
DEV_i(x,\mathcal{S}_{M})
=
- c^{*}_i(x)
$
between the optimal values of the maximum deviation problem (DEV01) and the min-cost flow problem of the instance $(G,c)_{x}^{i}$. The statement of the problem follows from this correspondence.
\qed
\end{proof}

\noindent
From Theorem \ref{theorem:robCut}, we can immediately derive the following corollary:

\begin{corollary} \label{corollary:robCut}
A solution $x \in \mathbb{R}_{+}^{n}$ is robust feasible w.r.t. $\mathcal{S}_{M}$ if and only if
$$
\bar{a}_{i}' x - c^{*}_i(x) \leq b_{i}
\hspace{1cm} \forall i \in I,
$$
where $c^{*}_i(x)$ is the minimum cost of a flow of the instance $(G,c)_{x}^{i}$.
\end{corollary}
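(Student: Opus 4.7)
The plan is to derive Corollary \ref{corollary:robCut} as an immediate consequence of Theorem \ref{theorem:robCut}, by recognising that robust feasibility decouples row by row and then quantifying the per-row characterisation over all $i \in I$. Robust feasibility of $x \in \mathbb{R}_{+}^{n}$ means that every constraint is satisfied under the worst admissible realisation of the coefficient matrix; since the band bounds $l_k, u_k$ and the deviation ranges $(d_{ij}^{k-1}, d_{ij}^{k}]$ defining $\mathcal{S}_{M}$ are imposed independently for each row $i$, the adversary can pick the worst realisation for row $i$ without affecting any other row. Hence robust feasibility is equivalent to the family of per-row inequalities $\sum_{j \in J} \bar{a}_{ij} x_j + DEV_i(x, \mathcal{S}_{M}) \leq b_i$ holding simultaneously for all $i \in I$.

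Next, I would apply Theorem \ref{theorem:robCut} separately to each $i \in I$, which identifies $DEV_i(x, \mathcal{S}_{M})$ with $-c^{*}_i(x)$ and therefore yields the equivalence between $\sum_{j \in J} \bar{a}_{ij} x_j + DEV_i(x, \mathcal{S}_{M}) \leq b_i$ and $\bar{a}_{i}' x - c^{*}_i(x) \leq b_i$. Taking the conjunction of these equivalences over $i \in I$ produces exactly the statement of the corollary, with the universal quantifier $\forall i \in I$ inherited from the row-wise decoupling established in the previous step.

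There is no substantive technical obstacle here, since all the real work (the min-cost flow construction and the one-to-one correspondence between feasible band assignments in $\bar{Y}_i$ and integral $(s,t)$-flows of value $n$ in $(G,c)_{x}^{i}$) has already been carried out inside Theorem \ref{theorem:robCut}. The only mild subtlety worth stating explicitly is the row-wise independence of $\mathcal{S}_{M}$, which justifies reducing global robust feasibility to a pointwise-in-$i$ condition on the worst-case deviations; once that reduction is made, the corollary follows as a one-line application of the theorem.
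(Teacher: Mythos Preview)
Your proposal is correct and matches the paper's approach: the paper states the corollary as an immediate consequence of Theorem~\ref{theorem:robCut} without giving a separate proof, and your argument is precisely the natural spelling-out of that immediacy, applying the per-constraint equivalence $DEV_i(x,\mathcal{S}_{M}) = -c^{*}_i(x)$ to every $i \in I$ and using the row-wise independence of the uncertainty set.
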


\noindent
According to this corollary, we can test the robustness of a solution $x \in \mathbb{R}_{+}^{n}$
by computing an integral flow $f^{i}$ of minimum cost $c^{*}_i(x)$ in $(G,c)_{x}^{i}$ for every $i \in I$ (we recall that this can done in polynomial time).
If $\bar{a}' x - c^{*}_i(x) \leq b_{i}$ for every $i$,
then $x$ is robust feasible. Otherwise $x$ is not robust and there exists an index
 $i \in I$ such that $\bar{a}' x - c^{*}_i(x) > b_{i}$ and thus:
\begin{equation}\label{eq:robCut}
\sum_{j\in J} \bar{a}_{ij} \hspace{0.1cm} x_{j}
+ \sum_{j\in J} \sum_{k\in K} - d_{ij}^{k} \hspace{0.1cm} x_{j} \hspace{0.1cm} f^i_{(v_j,w_k)} \leq b_{i}
\end{equation}
is valid for the polytope of the robust
solutions and cuts off the solution $x$.

\section{Computational Study} \label{sec:compStudy}

In this section, we test our new modeling and solution approaches to Robust Optimization on a set of realistic instances of the \emph{Power Assignment Problem}, a problem arising in the design of wireless networks. In particular, we compare the efficiency of solving directly  the compact formulation (RLP) with that of a cutting plane method based on the robustness cuts presented in Section \ref{sec:rocuts}. In the case of the Bertsimas-Sim model, such comparison led to contrasting conclusions (e.g., \cite{FiMo08,KoKuRa11}).

\vspace{0.2cm}

\noindent
\textbf{The Power Assignment Problem.}
The \emph{Power Assignment Problem} (PAP) is the problem of dimensioning the power emission of each transmitter in a wireless network, in order to provide service coverage to a number of user, while minimizing the overall power emission. The PAP is particularly important in the (re)optimization of networks that are updated to new generation digital transmission technologies. For a detailed introduction to the PAP and the general problem of designing  wireless networks, we refer the reader to \cite{MaRoSm07,DA10,DAMaSa10,MaRoSm06}.

A classical LP formulation for the PAP can be defined by introducing the following elements: 1) a vector of non-negative continuous variables $p$ that represent the power emissions of the transmitters; 2) a vector $P^{\max}$ of upper bounds on $p$ that represent technology constraints on the maximum power emissions; 3) a matrix $A$ of the coefficients that represent signal attenuation (\emph{fading coefficients}) for each transmitter-user  couple; 4) a vector of r.h.s. $\delta$ (signal-to-interference thresholds) that represent the minimum power values that guarantee service coverage.
Under the objective of minimizing the overall power emission, the PAP can be written in the following matrix form:
$$
\min
\hspace{0.1cm}
\mbox{\textbf{1}}' p
\hspace{0.2cm} \mbox{ s.t. }
\hspace{0.1cm}
A p \geq \delta
, \hspace{0.2cm}
0 \leq p \leq P^{\max}
\hspace{1.0cm} (PAP)
$$

\noindent
where exactly one constraint $a'_i p\ge \delta_i$ is introduced for each user  $i$ to represent the corresponding service coverage condition.

Each entry of matrix $A$ is classically computed by a propagation model and takes into account many factors (e.g., distance between transmitter and receiver, terrain features). However, the exact propagation behavior of a signal cannot be evaluated and thus each fading coefficient is naturally subject to uncertainty.
Neglecting such uncertainty may provide unpleasant surprises in the final coverage plan, where devices may turn out to be uncovered for bad deviations affecting the fading coefficients (this is particularly true in hard propagation scenarios, such as dense urban fabric). For a detailed presentation of the technical aspects of propagation, we refer the reader to \cite{Ra01}.

Following the ITU recommendations (e.g., \cite{ITU02}), we assume that the fading coefficients are mutually independent random variables and that each variable is log-normally distributed.
The adoption of the Bertsimas-Sim model would provide only a rough representation of the deviations associated with such distribution. We thus adopt the multi-band uncertainty model to obtain a more refined representation of the fading coefficient deviations.
In what follows, we denote the Bertsimas-Sim and the multi-band uncertainty model by (BS) and (MB), respectively.

\vspace{0.2cm}

\noindent
\textbf{Computational Results.}
In this computational study, we consider realistic instances corresponding to region-wide networks that implement the Terrestrial Digital Video Broadcasting technology (DVB-T) \cite{ITU02} and were taken as reference for the design of the new Italian DVB-T national network.
The uncertainty set is built taking into account the ITU recommendations \cite{ITU02} and discussions with our industrial partners in past projects about wireless network design.
Specifically, we assume that each fading coefficient follows a log-normal distribution with mean provided by the propagation model and standard deviation equal to 5.5 dB~\cite{ITU02}.
In our test-bed, the (MB) uncertainty set of a generic fading coefficient $a_{ij}$ is
constituted by 3 negative and 3 positive deviations bands (i.e., $K = 6$). Each band has a width equal to the 5$\%$ of the nominal fading value $\bar{a}_{ij}$. Thus the maximum allowed deviation is +/- $0.15 \cdot \bar{a}_{ij}$. For each constraint $i$, the bounds $l_k,u_k$ on the number of deviations are defined considering the cumulative distribution function of a log-normal random variable with standard deviation 5.5 dB.
The (BS) uncertainty set of each constraint considers the same maximum deviation of (MB) and the maximum number of deviating coefficients is $\Gamma = \lceil0.8 \cdot u^{\max}\rceil$, where $u^{\max} = \max \{ u_k : k \in K \backslash \{0\}\}$. This technically reasonable assumption on $\Gamma$ ensures that (BS) does not dominate (MB) a priori.

The computational results are reported in Table \ref{tab:results}. The tests were performed on a Windows machine with 1.80 GHz Intel Core 2 Duo processor and 2 GB RAM. All the formulations are implemented in C++ and solved by IBM ILOG Cplex 12.1, invoked by ILOG Concert Technology 2.9.
We considered 15 instances of increasing size corresponding to realistic DVB-T networks.  The first column of Table \ref{tab:results} indicates the ID of the instances. Columns $|I|,|J|$ indicate the number of variables and constraints of the problem, corresponding to the number of user devices and transmitters of the network, respectively. We remark that the coefficient matrices tend to be sparse, as only a (small) fraction of the transmitters is able to reach a user device with its signals. Columns $|I^{+}|,|J^{+}|$ indicate the number of additional variables and constraints needed in the compact robust counterpart (RLP). Columns PoR\% report \emph{the Price of Robustness} (PoR), i.e. the deterioration of the optimal value required to guarantee robustness. In particular, we consider the percentage increase of the robust optimal value w.r.t. the optimal value of the nominal problem, in the multi-band case (PoR\% (MB)) and in the Bertsimas-Sim case (PoR\% (BS)). Column $\Delta t\%$ reports the percentage increase of the time required to compute the robust optimal solution under (MB) by using the cutting plane method presented in Section \ref{sec:rocuts} w.r.t. the time needed to solve the compact formulation (RLP).
Finally, column Protect\% is a measure of the protection offered by the robust optimal solution and is computed in the following way: for each instance, we generate 1000 realizations of the uncertain coefficient matrix and we then compute the percentage of realizations in which the robust optimal solution is feasible. This is done for both (MB) and (BS).

Looking at Table \ref{tab:results}, the first evident thing is that the dimension of the compact robust counterpart under (MB) is much larger than that of the nominal problem. However, this is not an issue for Cplex, as all instances are solved within one hour and in most of the cases the direct solution of (RLP) takes less time than the cutting plane approach ($\Delta t\% < 0$). Anyway, for the instances of greater dimension the cutting plane approach becomes competitive and may even take less time ($\Delta t\% > 0$). Concerning the PoR, we note that under (MB) imposing robustness leads to a sensible increase in the overall power emission, that is anyway lower than that of (BS) in all but two cases. On the other hand, such increase of (MB) is compensated by a very good 90\% protection on average.
In the case of the PAP, (MB) thus seems convenient to model the log-normal uncertainty of fading coefficients, guaranteeing good protection at a reasonable price. Moreover, though (BS) offers higher protection for most instances, it is interesting to note that the increase of Protect\% of (BS) w.r.t (MB) is lower than the corresponding increase of PoR\% of (BS) w.r.t (MB).

\begin{table}[htbp]
\caption{Overview of the computational results} \label{tab:results}
{\small
\begin{center}
\begin{tabular}{|ccc||ccccc|ccccc|ccccccccccc|}
  \hline
  & & &
  & & & & &
  & & & & &
  & \raisebox{-0.5ex}{PoR\%} & & \raisebox{-0.5ex}{PoR\%} & & & &
  \raisebox{-0.5ex}{Protect\%} & & \raisebox{-0.5ex}{Protect\%} &
  \\
  & \raisebox{1.5ex}{$\mbox{ ID }$} & &
  %
  \raisebox{1.5ex}{$\mbox{ }$} &
    \raisebox{1.5ex}{$|I|$} &
  \raisebox{1.5ex}{$\mbox{ }$} &
    \raisebox{1.5ex}{$|J|$} &
  \raisebox{1.5ex}{$\mbox{ }$} &
  %
  \raisebox{1.5ex}{$\mbox{ }$} &
    \raisebox{1.5ex}{$|I^{+}|$} &
  \raisebox{1.5ex}{$\mbox{ }$} &
    \raisebox{1.5ex}{$|J^{+}|$} &
  \raisebox{1.5ex}{$\mbox{ }$} &
  %
  \raisebox{1.5ex}{$\mbox{ }$} &
    \raisebox{0.5ex}{(MB)} &
  \raisebox{1.5ex}{$\mbox{ }$} &
    \raisebox{0.5ex}{(BS)} &
  \raisebox{1.5ex}{$\mbox{ }$} &
    \raisebox{1.5ex}{$\Delta$t\%} &
  \raisebox{1.5ex}{$\mbox{ }$} &
    \raisebox{0.5ex}{(MB)} &
  \raisebox{1.5ex}{$\mbox{ }$} &
    \raisebox{0.5ex}{(BS)} &
  \\
  \hline
  \hline
  & D1 & &
  & 95 &   & 153 &   &
  & 3519 &   & 10098 &   &
  & 8.3 &  & 10.1 &  & -18.7 &  & 88.20 &
  & 92.53 &
  \\
  & D2 & &
  & 103 &   & 197 &   &
  & 4728 &   & 14184 &   &
  & 7.2 &  & 9.4 &  & -19 &  & 91.35 &
  & 92.47 &
  \\
  & D3 & &
  & 105 &   & 322 &   &
  & 7406 &   & 21252 &   &
  & 6.8 &  & 8.8 &  & -16.9 &  & 93.12 &
  & 96.40 &
  \\
  & D4 & &
  & 105 &   & 473 &   &
  & 10406 &   & 28380 &   &
  & 7.4 &  & 7.2 &  & -15.1 &  & 92.08 &
  & 91.42 &
  \\
  & D5 & &
  & 108 &   & 569 &   &
  & 13087 &   & 37554 &   &
  & 9.2 &  & 11.4 &  & -13.6 &  & 89.23 &
  & 90.29 &
  \\
  & D6 & &
  & 157 &   & 1088 &   &
  & 27200 &   & 84864 &   &
  & 6.6 &  & 9.1 &  & -6.2 &  & 85.46 &
  & 87.55 &
  \\
  & D7 & &
  & 165 &   & 1203 &   &
  & 31278 &   & 101052 &   &
  & 7.1 &  & 9.5 &  & -4.9 &  & 87.91 &
  & 89.16 &
  \\
  & D8 & &
  & 171 &   & 1262 &   &
  & 32812 &   & 106008 &   &
  & 8.7 &  & 10.8 &  & -4.1 &  & 89.40 &
  & 93.08 &
  \\
  & D9 & &
  & 178 &   & 1375 &   &
  & 35750 &   & 115500 &   &
  & 9.6 &  & 10.2 &  & -2.8 &  & 90.11 &
  & 91.90 &
  \\
  & D10 & &
  & 180 &   & 1448 &   &
  & 39096 &   & 130320 &   &
  & 7.9 &  & 9.6 &  & -1.7 &  & 91.54 &
  & 95.32 &
  \\
  & D11 & &
  & 180 &   & 1661 &   &
  & 46058 &   & 159456 &   &
  & 7.2 &  & 9.5 &  & 0.6 &  & 94.77 &
  & 96.70 &
  \\
  & D12 & &
  & 181 &   & 1779 &   &
  & 49812 &   & 170784 &   &
  & 7.5 &  & 10.1 &  & 1.8 &  & 88.22 &
  & 90.16 &
  \\
  & D13 & &
  & 183 &   & 1853 &   &
  & 53737 &   & 189006 &   &
  & 8.1 &  & 10.3 &  & 3.3 &  & 91.34 &
  & 92.21 &
  \\
  & D14 & &
  & 183 &   & 1940 &   &
  & 56260 &   & 197880 &   &
  & 10.3 &  & 9.7 &  & 3.1 &  & 86.50 &
  & 85.18 &
  \\
  & D15 & &
  & 185 &   & 2183 &   &
  & 63307 &   & 222666 &   &
  & 8.4 &  & 10.8 &  & 4.1 &  & 91.09 &
  & 92.70 &
  \\
  \hline
\end{tabular}
\end{center}
} 
\end{table}

\section{Conclusions and Future Work}  \label{sec:end}

In this work, we presented new theoretical results about multi-band uncertainty in Robust Optimization. Surprisingly, this natural and practically very relevant extension of the classical single band model by Bertsimas and Sim has attracted very little attention and we have thus started to fill this theoretical gap. We showed that, under multi-band uncertainty, the robust counterpart of an LP is linear and compact and that the problem of separating a robustness cut can be formulated as a min-cost flow problem and thus be solved efficiently. Tests on realistic network design instances showed that our new approach performs very well, thus encouraging further investigations. Future research will focus on refining the cutting plane method and enlarging the computational experience to other relevant real-world problems.

\end{document}